\author{Peyman Ghahremani\footnote{NSF for support through grant DMS-1500264 is gratefully acknowledged}}
\title{A Short Proof of the Bernstein Inequality for Formal Power Series}
\date{\vspace{-5ex}}
\newcounter{dummy} \numberwithin{dummy}{section}
\newtheorem{thm}[dummy]{Theorem}
\newtheorem{defn}[dummy]{Definition}
\newtheorem{lemma}[dummy]{Lemma}
\newtheorem{prop}[dummy]{Proposition}
\begin{document}
%\title{A SIMPLE PROOF OF BERNSTEIN INEQUALITY FOR $D-$MODULES OVER FORMAL POWER SERIES }
%\author{...}
%\thanks{NSF for support through grant DMS-1500264 is gratefully acknowledged}
\maketitle

\begin{abstract}
Let $k$ be a field of characteristic zero, let $R$ be the ring of formal power series in $n$ variables over $k$ and let $D(R,k)$ be the ring of $k-$linear differential operators in $R$. If $M$ is a finitely generated $D(R,k)-$module then $d(M)\geq n$ where $d(M)$ is the dimension of $M$. This inequality is called the Bernstein inequality. We provide a short proof.
\end{abstract}

\section{Introduction}
Throughout this paper $k$ is a field of characteristic zero. Let $R=k[[x_1,\cdots,x_n]]$ be the ring of formal power series in $n$ variables over  $k$ and let $D=D(R,k)$ be the ring of  $k-$linear differential operators of $R$. A celebrated result, known as the Bernstein inequality, says the following.

\begin{thm}\label{main}
If $M$ is a finitely generated left $D-$module, then $$d(M)\geq n,$$ where $d(M)$ is the dimension of $M$.
\end{thm}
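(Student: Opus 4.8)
The plan is to reduce the statement to the classical Bernstein inequality for the Weyl algebra $A_n=k[x_1,\dots,x_n]\langle\partial_1,\dots,\partial_n\rangle$ by a filtration–degeneration argument. First I would fix the notation for the dimension. Filter $D$ by order of differential operators, $D=\bigcup_i D_i$, so that $D_0=R$ and the associated graded ring is the (commutative) polynomial ring $\mathrm{gr}\,D\cong R[\xi_1,\dots,\xi_n]$, where $\xi_j$ is the principal symbol of $\partial_j$. Given $M$ finitely generated, I would choose a good filtration $\{M_i\}$ with $M_i=D_iM_0$ for a finitely generated $R$-submodule $M_0$; then $\mathrm{gr}\,M=\bigoplus_i M_i/M_{i-1}$ is a finitely generated graded $R[\xi]$-module, and $d(M)$ is the Krull dimension of $\mathrm{Supp}(\mathrm{gr}\,M)$, computed as the degree of the Hilbert–Samuel function taken with respect to the ideal $\mathfrak m R[\xi]+(\xi_1,\dots,\xi_n)$, where $\mathfrak m=(x_1,\dots,x_n)$. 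A preliminary step is to record that $d(M)$ is independent of the chosen good filtration.

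The heart of the approach is a second filtration that degenerates $R$ to a polynomial ring and $D$ to a Weyl algebra. The $\mathfrak m$-adic filtration on $R$ has associated graded the polynomial ring $\mathrm{gr}_{\mathfrak m}R\cong k[x_1,\dots,x_n]$. Since each $\partial_j$ lowers $\mathfrak m$-adic order by one while each $x_j$ raises it by one, I would put on $D$ the induced filtration $V$ governed by the shift $v(x^\alpha\partial^\beta)=|\alpha|-|\beta|$. In $\mathrm{gr}_V D$ the principal symbols $\bar x_j,\bar\partial_j$ satisfy the Heisenberg relation $[\bar\partial_j,\bar x_i]=\delta_{ij}$, because the top $v$-degree part of the commutator survives; hence $\mathrm{gr}_V D\cong A_n$. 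I would then equip $M$ with a $V$-filtration compatible with the good order filtration $\{M_i\}$ and form $\mathrm{gr}_V M$, a graded module over $A_n$.

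It remains to transfer the problem and conclude. I would show that $\mathrm{gr}_V M$ is finitely generated over $A_n$ and that passing to the associated graded preserves the relevant Hilbert function, so that $\dim_{A_n}\mathrm{gr}_V M=d(M)$. At that point I would invoke (or reprove) the Bernstein inequality for the Weyl algebra via Bernstein's commutator–injectivity count: for a good filtration $\{N_i\}$ of a nonzero $A_n$-module the map $(A_n)_i\to\mathrm{Hom}_k(N_i,N_{2i})$ is injective, since an operator annihilating $N_i$ has all commutators $[\cdot,x_s]$ and $[\cdot,\partial_s]$ of strictly lower degree and also annihilating, forcing it into the center $k$ and hence to $0$; comparing $\dim_k(A_n)_i\sim i^{2n}/(2n)!$ with $(\dim_k N_i)^2$ yields $2n\le 2\,\dim_{A_n}\mathrm{gr}_V M$, i.e. $d(M)\ge n$.

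\textbf{The main difficulty.} The delicate point is entirely in the second and third paragraphs: constructing the $V$-filtration on $M$ so that $\mathrm{gr}_V M$ is genuinely finitely generated over $A_n$ and has dimension equal to $d(M)$. Because $R$ is complete, $V$ is a two-sided infinite filtration and its interaction with the order filtration must be controlled by a bifiltration (Rees-algebra) argument to rule out collapse and to track finite generation and the Hilbert function simultaneously. I expect this compatibility of the two filtrations, rather than the combinatorial count over $A_n$, to be where the real work lies.
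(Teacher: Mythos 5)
Your reduction to the Weyl algebra cannot be completed: the difficulty you flag in your last paragraph is not a technical point to be controlled by a Rees-algebra bifiltration, but a genuine obstruction. Take $n=1$ and $M=D/D(1-x_1^2d_1)$, with $z$ the class of $1$. This module is nonzero: if $1=Q(1-x_1^2d_1)$ with $Q\ne 0$ of order $N$, then passing to symbols in the domain ${\rm gr}_\Sigma D=R[\bar d_1]$ shows that $Q(1-x_1^2d_1)$ has order exactly $N+1\geq 1$, a contradiction. Now $z=x_1^2d_1z$, hence $z=(x_1^2d_1)^mz$ for all $m\geq 0$, and $(x_1^2d_1)^m$ has $V$-shift $m$ (it maps $\mathfrak m^i$ into $\mathfrak m^{i+m}$, where $\mathfrak m=(x_1)$). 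Any filtration $\{V^jM\}_{j\in\mathbb Z}$ for which ${\rm gr}_VM$ is a module over ${\rm gr}_VD\cong A_1$ must satisfy $V^iD\cdot V^jM\subseteq V^{i+j}M$; if it is exhaustive then $z\in V^{j_0}M$ for some $j_0$, whence $z=(x_1^2d_1)^mz\in V^{j_0+m}M$ for every $m$, so $z\in\bigcap_jV^jM$, and then every $Qz\in M=Dz$ lies in $\bigcap_jV^jM$ as well. Thus ${\rm gr}_VM=0$ for every admissible $V$-filtration on $M$: there is nothing to hand to the Weyl algebra. The collapse is possible precisely because $V$ is $\mathbb Z$-indexed and unbounded below (a good filtration for the Bernstein filtration on $A_n$ is $\mathbb N$-indexed, hence automatically separated), and because $D(R,k)$ contains operators of strictly positive $\mathfrak m$-adic shift, such as $x_1^2d_1$, which can fix a generator.

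The paper takes an entirely different, degeneration-free route, and the example above indicates why it must. It chooses an associated prime $\mathfrak p$ of $M$ over $R$, an element $z$ with ${\texttt{Ann}}_R(z)=\mathfrak p$ of height $h$, and Noether-normalizes so that $R/\mathfrak p$ is finite over $T=k[[x_{h+1},\dots,x_n]]$. Setting $N=Dz$, an explicit computation with the minimal polynomials $f_i$ of $\bar x_i$ over the fraction field $\mathcal K$ of $T$ (using the identity $f_i(x_i)^td_i^tz=(-1)^tt!f_i'(x_i)^tz\ne 0$) shows that the elements $d_1^{t_1}\cdots d_h^{t_h}z$ are linearly independent over $\mathcal K$ in $\mathcal K\otimes_TN$. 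Counting these elements inside $\Sigma_tz$ gives at least $\binom{t+h}{h}$, so the relevant Hilbert polynomial has degree at least $h$; combined with $\dim R/\mathfrak p=n-h$ via ${\rm dim}\,\tilde D\geq{\rm dim}\,\tilde D/\tilde D_++{\rm height}\,\tilde D_+$, this yields $d(M)\geq n$. Your proposal would need, at minimum, to be restricted to modules on which some $V$-filtration is separated, and no such restriction covers all finitely generated $D$-modules.
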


For the ring of polynomials there is now a very short and simple proof of this result \cite[9.4.2]{Cout} due to A. Joseph. But for formal power series the only known (to the author) proof, due to Bjork \cite[2.7.2]{Bjork}, is much more complicated. It uses some sophisticated homological algebra including spectral sequences. 

Since the formal power series case is important in applications of $D$-modules to commutative algebra, it would be useful to have a simple proof of the Bernstein inequality in this case. This is accomplished in this paper. To emphasize the elementary nature of the proof this paper has been made self-contained modulo some basic commutative algebra. 

Our proof is inspired by \cite{einstein} and is from the author's thesis. I would like to thank my adviser, Professor Gennady Lyubeznik for guidance and support.

\section{Preliminaries}
Keeping the notation of the Introduction, $R$ is a subring of Hom$_k(R,R)$, the $k$-linear endomorphisms of $R$, via the map that sends $r\in R$ to the map $R\to R$ which is the multiplication by $r$ on $R$. Let  $d_i=\frac{\partial}{\partial x_i}:R\to R$ for $1\leq i\leq n$ be the $k$-linear partial differentiation with respect to $x_i$. The ring $D$ of $k$-linear differential operators of $R$ is the subring of the ring of $k$-linear endomorphisms Hom$_k(R,R)$ of $R$ generated by $R$ and $d_1,\dots, d_n$. 

Standard theorems of calculus imply that $d_i$ commutes with $d_j$ and $x_j$ for $j\ne i$. The product formula says that $d_i(fg)=d_i(f)g+fd_i(g)$ for every $f,g\in R$. Fixing $f$ and viewing this as equality between two differential operators applied to $g$, we get the following equality of differential operators
\begin{equation}\label{E: first}
d_if=fd_i+\frac{\partial f}{\partial x_i}.
\end{equation}

An element of $D$ is a finite sum with coefficients in $k$ of products of $x_1,\dots,x_n,d_1,\dots,d_n$. Using commutativity of $d_i$ with $x_j$ and $d_j$ for $j\ne i$ and (\ref{E: first}) with $f=x_i$ one can rewrite every product of $x_1,\dots,x_n,d_1,\dots,d_n$ as a sum of monomials $x_1^{\alpha_1}\cdots x_n^{\alpha_n}d_1^{\beta_1}\cdots d_n^{\beta_n}$ with coefficients in $k$. Collecting similar terms one can write every element of $D$ as a sum $\Sigma_{i_1,\dots,i_n}r_{i_1,\dots,i_n}d_1^{i_1}\cdots d_n^{i_n}$ where $r_{i_1,\dots,i_n}\in R$. Let $\{i_1,\dots, i_n\}$ be the index with smallest total degree $i_1+\dots+i_n$ in such a sum. Applying this sum to $x_1^{i_1}\cdots x_n^{i_n}$ one gets $$\Sigma_{i_1,\dots,i_n}r_{i_1,\dots,i_n}d_1^{i_1}\cdots d_n^{i_n}(x_1^{i_1}\cdots x_n^{i_n})=r_{i_1,\dots,i_n}i_1!\cdots i_n!.$$ Hence $\Sigma_{i_1,\dots,i_n}r_{i_1,\dots,i_n}d_1^{i_1}\cdots d_n^{i_n}= 0$ if and only if every $r_{i_1,\dots,i_n}=0$. Thus the ring $D$ is a free left $R$-module on the monomials $d_1^{\alpha_1}\cdots d_n^{\alpha_n}$ with $\alpha_j\geq 0$. 

There is a filtration $\Sigma=\Sigma_0\subseteq\Sigma_1\subseteq\Sigma_2\subseteq\cdots$ on $D$ where each $\Sigma_i$ is the free $R-$submodule of $D(R,k)$ on the monomials $d_1^{\alpha_1}\cdots d_n^{\alpha_n}$ with $\sum\alpha_j\leq i$. Clearly $$\texttt{gr}_\Sigma D=\Sigma_0\oplus(\Sigma_1/\Sigma_0)\oplus(\Sigma_2/\Sigma_1)\oplus\cdots=R[\bar{d}_1,\cdots,\bar{d}_n]$$ is the ring of polynomials over $R$ in $\bar d_1,\dots,\bar d_n$ where $\bar{d}_i$ is the image of $d_i$ in $\Sigma_1/\Sigma_0$ for $1\leq i\leq n$. This ring $\texttt{gr}_\Sigma D$ is graded in the usual way, i.e. deg$r=0$ for $r\in R$ and deg$\bar d_i=1$ for all $i$.

Now let $M$ be a left $D(R,k)-$module. A good filtration $\Gamma$ for $M$ is an ascending chain $\Gamma_0\subseteq\Gamma_1\subseteq\Gamma_2\subseteq\cdots$ of $R-$submodules of $M$,  such that $\bigcup_{i=0}^\infty\Gamma_i=M$, $\Sigma_i\Gamma_j\subseteq\Gamma_{i+j}$ and $\texttt{gr}_\Gamma(M)=\Gamma_0\oplus(\Gamma_1/\Gamma_0)\oplus(\Gamma_2/\Gamma_1)\oplus\cdots$ is a finitely generated $\texttt{gr}_\Sigma D-$module. A good filtration exists provided $M$ is finitely generated. For example, if $M$ is generated by elements $m_1,\dots, m_s\in M$, then the filtration with $\Gamma_i=\Sigma_im_1+\dots+\Sigma_im_s$ is a good filtration. 

The following proposition is well-known \cite[1.3.4]{Bjork}   $\mathrm{and}$    \cite[11.1.1]{Cout}.

\begin{prop}
Let $M$ be a finitely generated left $D(R,k)-$module. Let $\Gamma=\Gamma_0\subseteq\Gamma_1\subseteq\Gamma_2\subseteq\cdots$ and $\Omega=\Omega_0\subseteq\Omega_1\subseteq\Omega_2\subseteq\cdots$ be two good filtrations on $M$.  Then:
 \item(i)\label{partone} There exists a number $w$ such that $\Omega_{j-w}\subseteq\Gamma_j\subseteq\Omega_{j+w}$ for all $j\geq 0$.
 \item(ii)\label{parttwo} $\sqrt{\texttt{Ann}_{R[\bar{d}_1,\cdots,\bar{d}_n]}(\texttt{gr}_{\Gamma}(M))}=\sqrt{\texttt{Ann}_{R[\bar{d}_1,\cdots,\bar{d}_n]}(\texttt{gr}_{\Omega}(M))}$
\end{prop}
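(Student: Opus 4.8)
The plan is to prove (i) first, then derive (ii) from it.

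For part (i): I want to show that two good filtrations are "equivalent" — each bounded by a shift of the other. The key observation is that any good filtration is eventually "generated" by finitely many of its terms. Let me think about what good filtration means.

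A good filtration Γ on M is an ascending chain of R-submodules with ⋃Γᵢ = M, ΣᵢΓⱼ ⊆ Γᵢ₊ⱼ, and gr_Γ(M) finitely generated over gr_Σ D = R[d̄₁,...,d̄ₙ].

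The finite generation of gr_Γ(M) gives me that there's some N such that gr_Γ(M) is generated in degrees ≤ N. This means: for all j ≥ N, Γⱼ₊₁ = Σ₁Γⱼ (the filtration is "generated" beyond degree N). More generally Γⱼ = Σ_{j-N} Γ_N for j ≥ N.

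Strategy for (i):
- From good filtration Ω finitely generated, there's N₀ such that Ω_{j} = Σ_{j-N₀}Ω_{N₀} for j ≥ N₀.
- Since ⋃Γᵢ = M ⊇ Ω_{N₀} (which is finitely generated over R), there's some w₀ such that Ω_{N₀} ⊆ Γ_{w₀}.
- Then for j ≥ N₀: Ω_j = Σ_{j-N₀}Ω_{N₀} ⊆ Σ_{j-N₀}Γ_{w₀} ⊆ Γ_{j-N₀+w₀}. So Ω_j ⊆ Γ_{j+w} with w = w₀ - N₀ (assuming w₀ ≥ N₀, can enlarge w).
- By symmetry, swapping roles, Γⱼ ⊆ Ω_{j+w'} for some w'. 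Take max.
- Handle small j by enlarging w (finitely many cases).

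Actually I need both containments in the statement Ω_{j-w} ⊆ Γⱼ ⊆ Ω_{j+w}. The containment Ω_j ⊆ Γ_{j+w} gives (shifting) Ω_{j-w} ⊆ Γⱼ. And Γⱼ ⊆ Ω_{j+w'} directly. So I get symmetric bound with w = max.

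The main subtlety: is Ω_{N₀} finitely generated over R? Yes — because gr_Ω(M) finitely generated means each graded piece is... hmm, actually finite generation over gr_Σ D means finitely many generators total, so Ω_{N₀} is spanned over R by finitely many generators plus lower stuff. Let me think: gr_Ω(M) finitely generated over R[d̄₁,...,d̄ₙ] means there are finitely many homogeneous generators, say of degrees ≤ N₀. Then Ω_{N₀} is the R-span (in M) of lifts of all generators in degrees ≤ N₀, which is finitely generated over R since R is Noetherian and... actually Ω_{N₀} being a quotient-building: Ω_j/Ω_{j-1} are the graded pieces, each finitely generated over R (since gr is f.g. over Noetherian R[d̄]), so Ω_{N₀} is f.g. over R by induction. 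Good.

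For part (ii): Once (i) holds, I want to show the radicals of the annihilators agree. Let S = R[d̄₁,...,d̄ₙ]. Denote G = gr_Γ(M), H = gr_Ω(M). I claim √Ann_S(G) = √Ann_S(H).

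The standard approach: the radical of the annihilator is determined by the support (the set of primes containing Ann), and more precisely √Ann(G) = intersection of associated/minimal primes of the support. The key is that the characteristic variety (support of gr M) is independent of the good filtration. I should show that G and H have the same support in Spec S, from which equal radicals follow (since √Ann M = ∩_{p ∈ Supp M} p, the intersection over minimal primes of support).

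How to compare supports using (i)? The shift relation Ω_{j-w} ⊆ Γⱼ ⊆ Ω_{j+w} relates the filtrations but not directly the graded modules. The cleaner route: show that a prime p ∈ Supp(G) iff p ∈ Supp(H).

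Here's the idea I'd pursue. Homogenize/localize. Actually the classical argument (Bjork/Coutinho): from (i) one builds comparison maps. Alternatively, consider the homogenized module over the Rees ring. Let me think of the direct support argument.

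Let p be a homogeneous prime of S. I want: G_p = 0 ⟺ H_p = 0. Suppose H_p = 0, meaning some s ∉ p kills H. The relation between Γ and Ω at the level of graded pieces is what I need. Let me set up multiplication by t (Rees ring degree shift). The shift Γⱼ ⊆ Ω_{j+w} induces natural S-module maps between the graded objects only up to a shift/degree ambiguity; this is the technical heart.

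Here is the concrete approach I would use. Define on gr_Γ(M) the action and note both G and H are finitely generated graded S-modules. From (i) I will produce, for the top symbol, a relation showing every prime in the support of one lies in the support of the other via the "going down w steps" inclusions, because the symbol of an element computed w.r.t. Γ versus Ω differs by lower order terms that lie in lower filtration pieces. I expect the support equality to follow from: an element x ∈ M has a Γ-symbol in degree d iff x ∈ Γ_d ∖ Γ_{d-1}; its Ω-symbol lives in degree between d-w and d+w; the leading symbols generate the same radical ideal of annihilator because a product of S annihilating one set of symbols, raised to a suitable power, annihilates the shifted symbols modulo lower order — and in the associated graded everything lower order is already accounted for in lower degree components, which over the radical get absorbed.

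The main obstacle. The hard part will be converting the filtration-level inclusions of (i) into a statement about annihilators in the associated graded ring S. The inclusions Ω_{j-w} ⊆ Γⱼ ⊆ Ω_{j+w} are not ring-equivariant in a way that directly gives an S-module map gr_Γ M → gr_Ω M, because passing to the associated graded loses the information tying degree j to degree j+w. I expect to need an intermediate device — either the Rees module over R[t,t⁻¹]-type construction (so that the shift by w becomes multiplication by tʷ, an honest degree-w map), or a direct symbol-calculus argument showing that if s ∈ S satisfies s·gr_Ω M = 0 then some power sᴺ annihilates gr_Γ M. I would pursue the Rees-ring route: form the Rees modules of both filtrations inside M[t], use (i) to show they agree up to tʷ-torsion, deduce equal supports over S = Rees/(t), and conclude √Ann equality. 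Getting the torsion bookkeeping exactly right — ensuring the t-power shift lands in the correct degree and that Noetherianity of S (from R Noetherian) lets me pass from "some power of each generator" to "a single power of the whole ideal" — is the delicate point.
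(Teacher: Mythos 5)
Your part (i) is correct and is essentially the paper's own argument: finite generation of $\texttt{gr}_\Gamma(M)$ shows the filtration is eventually generated by an initial segment (your $\Gamma_j=\Sigma_{j-N}\Gamma_N$ for $j\geq N$ is the paper's inclusion $\Gamma_v\subseteq R_v=\Sigma_v\Gamma_0+\cdots+\Sigma_{v-k_0}\Gamma_{k_0}$ in different clothing), Noetherianity of $R$ places that initial segment inside some $\Omega_{w_0}$, and symmetry plus enlarging $w$ for the finitely many small $j$ finishes. Your verification that the $\Gamma_j$ are finitely generated over $R$ is also the right (and needed) remark.

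Part (ii), however, is not a proof but a survey of possible proofs, and the step you yourself flag as ``the delicate point'' is exactly the mathematical content that is missing. Concretely: from (i) you get maps $t^w:R_\Gamma(M)\to R_\Omega(M)$ and $t^w:R_\Omega(M)\to R_\Gamma(M)$ between the Rees modules whose composites are $t^{2w}$, but the inference you then want --- ``they agree up to $t^w$-torsion, hence the special fibers $\texttt{gr}_\Gamma(M)=R_\Gamma/tR_\Gamma$ and $\texttt{gr}_\Omega(M)=R_\Omega/tR_\Omega$ have equal support'' --- is \emph{false} for abstract modules admitting such maps. For example, over $k[x,t]$ take $A=k[x,t]/(x)$ and $B=A\oplus k[x,t]/(t)$ with $\varphi:A\to B$, $a\mapsto(a,0)$, and $\psi:B\to A$, $(b,c)\mapsto t^{2w}b$; both composites are multiplication by $t^{2w}$, yet $A/tA$ has support $\{(x)\}$ in $\mathrm{Spec}\,k[x]$ while $B/tB$ has support all of $\mathrm{Spec}\,k[x]$. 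What rescues the Rees situation is that both Rees modules sit inside $M[t]$, where $t$ acts injectively, and exploiting that torsion-freeness forces an explicit computation with powers --- which is the whole proof, and which the paper carries out directly without Rees machinery: given homogeneous $f$ of degree $s$ with $f^m\texttt{gr}_\Gamma(M)=0$, lift $f$ to $\beta\in\Sigma_s$; then $\beta^m\Gamma_i\subseteq\Gamma_{ms+i-1}$, and induction on $q$ gives $\beta^{mq}\Gamma_i\subseteq\Gamma_{i+msq-q}$, so each application of $\beta^m$ gains one unit of slack. Taking $q=2w+1$, the accumulated slack beats the total loss $2w$ incurred by passing from $\Omega$ to $\Gamma$ and back, yielding $\beta^{m(2w+1)}\Omega_i\subseteq\Omega_{i+ms(2w+1)-1}$, i.e.\ $f^{m(2w+1)}$ kills $\texttt{gr}_\Omega(M)$. (In Rees language this is the same computation: $F^{mq}R_\Gamma\subseteq t^qR_\Gamma$ for the lift $F=\beta t^s$, then $t^wF^{mq}R_\Omega\subseteq t^{q-w}R_\Omega$, and one cancels $t^w$ using injectivity of $t$ on $M[t]$.) Until you supply this power-gaining step, your part (ii) has a genuine gap; your alternative one-line suggestion (``show $s\cdot\texttt{gr}_\Omega M=0$ implies $s^N\texttt{gr}_\Gamma M=0$''), which is precisely the paper's route, is likewise stated but not executed.
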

\begin{proof} $(i)$ Let $\sigma_i:\Sigma_i\rightarrow \Sigma_i/\Sigma_{i-1}$ and $\mu_i:\Gamma_i\rightarrow \Gamma_i/\Gamma_{i-1}$ be the natural surjections.
  Since $\texttt{gr}_{\Gamma}(M)$ is a finitely generated $R[\bar{d}_1,\cdots,\bar{d}_n]-$module, there exist integers $k_1,\cdots,k_s$ and elements $u_i\in\Gamma_{k_i}\setminus\Gamma_{k_i-1}$ with $1\leq i\leq s$ such that $\mu_{k_1}(u_1),\cdots,\mu_{k_s}(u_s)$ generate $\texttt{gr}_\Gamma(M)$ as an $R[\bar{d}_1,\cdots,\bar{d}_n]-$module. Let $k_0=\texttt{max}\{k_1,\cdots,k_s\}$. For $v\geq k_0$ we set $R_v=\Sigma_v\Gamma_0+\cdots+\Sigma_{v-k_0}\Gamma_{k_0}$. We show that $\Gamma_v\subseteq R_v$ for $v\geq k_0$ by induction. Clearly $\Gamma_{k_0}=R_{k_0}$. Assume that $v>k_0$ and $\Gamma_{v-1}\subseteq R_{v-1}$. Let $\gamma\in\Gamma_v$. There exist $b_{v-k_1}\in\Sigma_{v-k_1},\cdots,b_{v-k_s}\in\Sigma_{v-k_s}$ such that 
$$\mu_v(\gamma)=\sigma_{v-k_1}(b_{v-k_1})\mu_{k_1}(u_1)+\cdots+\sigma_{v-k_s}(b_{v-k_s})\mu_{k_s}(u_s)$$

\noindent
thus 
$$\gamma\in\Sigma_{v-k_1}\Gamma_{k_1}+\cdots+\Sigma_{v-k_s}\Gamma_{k_s}+\Gamma_{v-1}\subseteq R_v+\Gamma_{v-1}\subseteq R_v+R_{v-1}=R_v$$
whence $\Gamma_v\subseteq R_v$. This completes the induction.

\noindent
Clearly $\{\Omega_j\cap\Gamma_{k_0}\}_{j\geq 0}$ forms an increasing chain of the finitely generated $R-$module $\Gamma_{k_0}$. Since $\bigcup_{j\geq 0}\Omega_j=M$ and $R$ is Noetherian we have $\Gamma_{k_0}\subseteq\Omega_{w'}$ for some number $w'$. If $0\leq j\leq k_0$ and $v\geq k_0$ we have 
$$\Sigma_{v-j}\Gamma_j\subseteq\Sigma_{v-j}\Gamma_{k_0}\subseteq\Sigma_{v-j}\Omega_{w'}\subseteq\Sigma_v\Omega_{w'}\subseteq\Omega_{v+w'}$$

\noindent
This shows that $\Gamma_v\subseteq R_v\subseteq\Omega_{v+w'}$ for all $v\geq k_0$. If $v<k_0$ then $\Gamma_v\subseteq\Gamma_{k_0}\subseteq\Omega_w\subseteq\Omega_{v+w'}$. Thus $\Gamma_v\subseteq\Omega_{v+w'}$ for all $v\geq 0$.  

\noindent
Swapping $\Gamma$ and $\Omega$ and repeating the above proof gives a number $w''$ with $\Omega_v\subseteq\Gamma_{v+w''}$ for all $v\geq 0$. Set $w=\texttt{max}\{w',w''\}$. This proves $(i)$.

\noindent
$(ii)$  Let $f\in\sqrt{\texttt{Ann}_{R[\bar{d}_1,\cdots,\bar{d}_n]}(\texttt{gr}_{\Gamma}(M))}$ be homogeneous of degree $s$. There exists an integer $m\geq 1$ with $f^m\in{\texttt{Ann}_{R[\bar{d}_1,\cdots,\bar{d}_n]}(\texttt{gr}_{\Gamma}(M))}$ and an element $\beta\in\Sigma_s$ with $f=\sigma_s(\beta)$. Thus $\beta^m\Gamma_i\subseteq\Gamma_{ms+i-1}$ for every $i\geq0$. By induction on $q$ this implies that 
\begin{equation}\label{EE}
\beta^{mq}\Gamma_i\subseteq\Gamma_{i+msq-q}    
\end{equation}
for every $q\geq 1$. By $(i)$ there exists an integer $w$ such that $\Gamma_{i-w}\subseteq\Omega_i\subseteq\Gamma_{i+w}$ for all $i\geq 0$. Together with (\ref{EE}) for $q=2w+1$ this implies 
$$\beta^{m(2w+1)}\Omega_i\subseteq\beta^{m(2w+1)}\Gamma_{i+w}\subseteq\Gamma_{i+ms(2w+1)-w-1}\subseteq\Omega_{i+ms(2w+1)-1}.$$
Thus $\beta^{m(2w+1)}\Omega_i\subseteq\Omega_{i+ms(2w+1)-1}$ for all $i\geq 0$. Therefore $f^{m(2w+1)}\in\texttt{Ann}_{R[\bar{d}_1,\cdots,\bar{d}_n]}(\texttt{gr}_{\Omega}(M))$, i.e. $f\in\sqrt{\texttt{Ann}_{R[\bar{d}_1,\cdots,\bar{d}_n]}(\texttt{gr}_{\Omega}(M))}$. Hence
$\sqrt{\texttt{Ann}_{R[\bar{d}_1,\cdots,\bar{d}_n]}(\texttt{gr}_{\Gamma}(M))}\subseteq\sqrt{\texttt{Ann}_{R[\bar{d}_1,\cdots,\bar{d}_n]}(\texttt{gr}_{\Omega}(M))}$. The opposite inclusion follows similarly.
\end{proof}

Recall that if $A$ is a graded commutative Noetherian ring and $N$ is a finitely generated graded graded $A$-module, then ann$(N)$, the annihilator of $N$, is a homogeneous ideal of $A$ and the dimension of $N$ is defined to be the Krull dimension of the graded ring $A/ann(M)$, i.e. the maximum length of a chain of homogeneous prime ideals $\mathfrak p_1\subset \mathfrak p_2\subset\dots\subset\mathfrak p_d$ in the ring $A/ann(M)$.

The above proposition shows, in particular, that the dimension of $\texttt{gr}_{\Gamma}(M)$ as a graded $\texttt{gr}_\Sigma D$-module, is independent of the good filtration $\Gamma$. 

\begin{defn}
The dimension of a finitely generated $D-$module $M$, denoted $d(M)$, is the dimension of $\texttt{gr}_{\Gamma}(M)$ as a graded $\texttt{gr}_\Sigma D$-module, where $\Gamma$ is any good filtration on $M$.
\end{defn}

\begin{lemma}\label{submodule}
Let $M$ be a finitely generated $D$-module and let $M'\subset M$ be a submodule. Then $d(M')\leq d(M)$.
\end{lemma}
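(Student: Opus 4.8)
The plan is to induce a good filtration on $M'$ from a chosen one on $M$ and then compare annihilators. First I would record that $D$ is left Noetherian: its associated graded ring $\texttt{gr}_\Sigma D=R[\bar d_1,\dots,\bar d_n]$ is a polynomial ring over the Noetherian ring $R$, hence Noetherian by the Hilbert basis theorem, and a filtered ring whose associated graded ring is left Noetherian is itself left Noetherian. Consequently the submodule $M'$ of the finitely generated module $M$ is again finitely generated, so $d(M')$ is defined.

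Next, fix any good filtration $\Gamma=\Gamma_0\subseteq\Gamma_1\subseteq\cdots$ on $M$ and define the induced filtration on $M'$ by $\Gamma'_i=\Gamma_i\cap M'$. I would verify the defining properties quickly: $\bigcup_i\Gamma'_i=(\bigcup_i\Gamma_i)\cap M'=M'$, and since $M'$ is a $D$-submodule, $\Sigma_i\Gamma'_j=\Sigma_i(\Gamma_j\cap M')\subseteq\Sigma_i\Gamma_j\cap\Sigma_iM'\subseteq\Gamma_{i+j}\cap M'=\Gamma'_{i+j}$. The only nontrivial requirement is that $\texttt{gr}_{\Gamma'}(M')$ be a finitely generated $\texttt{gr}_\Sigma D$-module.

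The crux is to realize $\texttt{gr}_{\Gamma'}(M')$ as a graded $\texttt{gr}_\Sigma D$-submodule of $\texttt{gr}_\Gamma(M)$. For each $i$, the inclusion $\Gamma'_i\hookrightarrow\Gamma_i$ composed with the projection $\Gamma_i\to\Gamma_i/\Gamma_{i-1}$ has kernel $\Gamma'_i\cap\Gamma_{i-1}=\Gamma_{i-1}\cap M'=\Gamma'_{i-1}$, so it induces an injection $\Gamma'_i/\Gamma'_{i-1}\hookrightarrow\Gamma_i/\Gamma_{i-1}$. Summing over $i$ yields an injective, degree-preserving map $\texttt{gr}_{\Gamma'}(M')\hookrightarrow\texttt{gr}_\Gamma(M)$ of $\texttt{gr}_\Sigma D$-modules. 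Since $\texttt{gr}_\Sigma D$ is Noetherian and $\texttt{gr}_\Gamma(M)$ is finitely generated, its submodule $\texttt{gr}_{\Gamma'}(M')$ is finitely generated, so $\Gamma'$ is a good filtration on $M'$. This finite generation step is the main obstacle; everything else is formal once the embedding and the Noetherianity of $\texttt{gr}_\Sigma D$ are in place.

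Finally, because $\texttt{gr}_{\Gamma'}(M')$ is a submodule of $\texttt{gr}_\Gamma(M)$, we get the containment $\texttt{Ann}_{R[\bar d_1,\cdots,\bar d_n]}(\texttt{gr}_\Gamma(M))\subseteq\texttt{Ann}_{R[\bar d_1,\cdots,\bar d_n]}(\texttt{gr}_{\Gamma'}(M'))$. Passing to quotients, $\texttt{gr}_\Sigma D/\texttt{Ann}(\texttt{gr}_{\Gamma'}(M'))$ is a quotient ring of $\texttt{gr}_\Sigma D/\texttt{Ann}(\texttt{gr}_\Gamma(M))$, so its Krull dimension is no larger. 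By the definition of $d$ (well-defined by the Proposition, and computed here using the good filtrations $\Gamma$ and $\Gamma'$) this says precisely $d(M')\leq d(M)$.
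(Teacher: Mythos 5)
Your proof is correct and takes essentially the same route as the paper: induce the filtration $\Gamma'_i=\Gamma_i\cap M'$, embed $\texttt{gr}_{\Gamma'}(M')$ as a graded $\texttt{gr}_\Sigma D$-submodule of $\texttt{gr}_\Gamma(M)$, and compare annihilators. In fact you are more careful than the paper, which leaves implicit both the left Noetherianity of $D$ (needed so that $M'$ is finitely generated) and the finite generation of $\texttt{gr}_{\Gamma'}(M')$ over the Noetherian ring $R[\bar d_1,\dots,\bar d_n]$ (needed so that $\Gamma'$ is actually a good filtration and computes $d(M')$).
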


\begin{proof} A good filtration $\Gamma$ on $M$ induces the filtration $\Gamma'_i=\Gamma_i\cap M'$ on $M'$. The $R$-module injections $\Gamma'_i\subset \Gamma_i$ and $\Gamma'_{i-1}\subset \Gamma_{i-1}$ induce an $R$-module injection $\Gamma'_i/\Gamma'_{i-1}\subset\Gamma_i/\Gamma_{i-1}$. Hence $\texttt{gr}_{\Gamma'}(M')$ is a $\texttt{gr}_\Sigma D$-submodule of $\texttt{gr}_{\Gamma}(M)$ and so ann$(\texttt{gr}_{\Gamma'}(M'))\supset {\rm ann} (\texttt{gr}_{\Gamma}(M))$.
\end{proof}

\begin{lemma} \label{K-algebra} {\rm \cite[11.2]{AMcD}}
Let $A=A_0\oplus A_1\oplus A_2\oplus\dots$ be a commutative graded Noetherian ring with $A_0=k$ a field, generated by $A_1$ as an $A_0$-algebra. Let $N=N_0\oplus N_1\oplus N_2\oplus \dots$ be a finitely generated graded $A$-module. For sufficiently large $t$ the function $p(t)={\rm dim}_k(N_0\oplus\dots\oplus N_t)$ is a polynomial in $t$ of degree $d={{\rm dim}}N$.
\end{lemma}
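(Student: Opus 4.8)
The plan is to reduce everything to the behaviour of the \emph{Poincaré series}
\[
P_N(\lambda)=\sum_{t\ge 0}(\dim_k N_t)\,\lambda^t ,
\]
and to proceed in two stages: first show that $P_N$ is a rational function of the form $g(\lambda)/(1-\lambda)^e$ with $g(1)\ne 0$, which immediately yields that $p(t)$ is eventually a polynomial of degree $e$; and second, identify the pole order $e$ with the Krull dimension $\dim N$. Since $A$ is Noetherian and generated by $A_1$ over $k$, the ideal $A_+$ is finitely generated in degree one, so $\dim_k A_1=:s<\infty$; choosing a $k$-basis $x_1,\dots,x_s$ of $A_1$ and surjecting $k[X_1,\dots,X_s]\twoheadrightarrow A$, which leaves both the Hilbert function and the graded ring $A/\mathrm{ann}(N)$ unchanged up to isomorphism, I may assume $A=k[x_1,\dots,x_s]$.

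For the first stage (Hilbert--Serre) I would induct on $s$. Multiplication by $x_s$ gives, in each degree, a four-term exact sequence
\[
0\to K_t\to N_t\xrightarrow{\,x_s\,} N_{t+1}\to C_{t+1}\to 0,
\]
where $K=\ker(x_s)$ and $C=\operatorname{coker}(x_s)$ are both annihilated by $x_s$, hence are finitely generated graded modules over $A/(x_s)\cong k[x_1,\dots,x_{s-1}]$. Additivity of $\dim_k$ over this sequence, multiplied by $\lambda^{t+1}$ and summed over $t$, yields the functional equation $(1-\lambda)P_N(\lambda)=\dim_k N_0+P_C(\lambda)-\dim_k C_0-\lambda P_K(\lambda)$; by the inductive hypothesis $P_K$ and $P_C$ have denominator $(1-\lambda)^{s-1}$, so $P_N$ has denominator $(1-\lambda)^s$. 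Writing $P_N(\lambda)=g(\lambda)/(1-\lambda)^e$ in lowest terms and extracting coefficients by partial fractions shows $\dim_k N_t$ agrees for large $t$ with a polynomial of degree $e-1$; since $p(t)=\sum_{j\le t}\dim_k N_j$ has generating series $P_N(\lambda)/(1-\lambda)$, the function $p(t)$ is eventually a polynomial of degree exactly $e$.

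The second stage---proving $e=\dim N$---is where the real work lies, and it is the step I expect to be the main obstacle, since it matches a purely numerical invariant of the Hilbert function against the algebraic dimension. My plan is to use a homogeneous prime filtration $0=N^{0}\subset N^{1}\subset\dots\subset N^{r}=N$ with $N^{i}/N^{i-1}\cong (A/\mathfrak p_i)(a_i)$ for homogeneous primes $\mathfrak p_i\supseteq\mathrm{ann}(N)$. Writing $e(L)$ for the order of the pole of $P_L$ at $\lambda=1$, both sides are additive along this filtration: the nonnegativity of all coefficients rules out cancellation of the leading pole, so $e(N)=\max_i e(A/\mathfrak p_i)$, while likewise $\dim N=\max_i \dim A/\mathfrak p_i$ because the minimal primes of $\mathrm{ann}(N)$ occur among the $\mathfrak p_i$. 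This reduces the claim to the case of a graded domain $A/\mathfrak p$.

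For the domain case I would invoke graded Noether normalization to produce algebraically independent degree-one elements $y_1,\dots,y_m\in (A/\mathfrak p)_1$, with $m=\dim A/\mathfrak p$, over which $A/\mathfrak p$ is a finitely generated module. The graded inclusion $k[y_1,\dots,y_m]\hookrightarrow A/\mathfrak p$ gives $\dim_k(A/\mathfrak p)_t\ge \dim_k k[y]_t$ for all $t$, forcing $e(A/\mathfrak p)\ge m$; on the other hand, being module-finite over a polynomial ring on $m$ degree-one generators, the first stage gives $e(A/\mathfrak p)\le m$. Hence $e(A/\mathfrak p)=m=\dim A/\mathfrak p$, which closes the reduction and proves $e=\dim N$. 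The only external ingredients I would cite from commutative algebra, rather than re-derive, are the existence of the homogeneous prime filtration and graded Noether normalization; everything else is bookkeeping with nonnegative power series.
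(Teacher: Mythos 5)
The paper never actually proves this lemma---it is quoted with a bare citation to \cite[11.2]{AMcD}---so there is no internal argument to compare you against; what can be said is that your proof is correct, self-contained, and follows the classical route. Stage 1 (the four-term exact sequences $0\to K_t\to N_t\to N_{t+1}\to C_{t+1}\to 0$, the resulting functional equation for $P_N$, induction on the number of degree-one generators, partial fractions, and the extra factor $1/(1-\lambda)$ to pass from $\dim_k N_t$ to the cumulative function $p(t)$---a point many would overlook, since the lemma is about the sum, not about $\dim_k N_t$) is exactly Hilbert--Serre. Stage 2 is where your write-up adds real value: the subtle half of the statement, that the degree equals the Krull dimension of $A/\mathrm{ann}(N)$ rather than merely the pole order of the Poincar\'e series, is precisely what the bare citation glosses over, and your prime-filtration argument supplies it correctly. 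In particular, the observation that nonnegativity of all coefficients forbids cancellation of the leading pole, so that both the pole order $e(-)$ and $\dim$ are computed as maxima over the primes $\mathfrak p_i$ of the filtration (the latter because the minimal primes of $\mathrm{ann}(N)$ occur among the $\mathfrak p_i$), is the right mechanism for reducing to the domain case.

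One genuine, though easily repairable, flaw: in the domain case you invoke graded Noether normalization to produce algebraically independent elements of \emph{degree one}. That is guaranteed only when $k$ is infinite; over a finite field a graded domain generated in degree one need not admit a linear system of parameters (homogeneous coordinate rings of space-filling curves in $\mathbb{P}^2$, which pass through every $\mathbb{F}_q$-rational point, give counterexamples). The lemma is stated for an arbitrary field, so as written your proof does not cover it. Either of two one-line fixes closes the gap: (a) base-change to $k(u)$, which changes neither $\dim_k N_t$ nor $\dim A/\mathrm{ann}(N)$ and makes the ground field infinite; or (b) allow normalizing elements $y_i$ of degrees $d_i\geq 1$, note that $P_{k[y_1,\dots,y_m]}(\lambda)=\prod_i\bigl(1-\lambda^{d_i}\bigr)^{-1}$ still has pole order exactly $m$ at $\lambda=1$, and run both inequalities $m\leq e(A/\mathfrak p)\leq m$ as before (the Hilbert--Serre induction of your stage 1 works verbatim with denominator $\prod_i(1-\lambda^{d_i})$). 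In the paper's application the field is of characteristic zero, hence infinite, so the flaw is harmless for the use made of the lemma, but it should be patched if the lemma is to be proved in the generality stated.
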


\begin{lemma} \label{NoethNorm} {\rm (Noether normalization \cite[VII, Theorem 31]{ZS})}
Let $\mathfrak{p}$ be a prime ideal of $R$ of height $h$. Variables $x_1\dots,x_n$ can be chosen so that  $\mathfrak{p}\cap k[[x_{h+1},\cdots,x_n]]=0$ and $R/\mathfrak{p}$ is finite over $k[[x_{h+1},\cdots,x_n]]$. 
\end{lemma}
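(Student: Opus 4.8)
The plan is to argue by induction on the number of variables $n$, peeling off one variable at a time via the Weierstrass preparation and division theorems. When $n=0$ the ring is the field $k$ and the only prime is $\mathfrak p=0$ of height $0$, so the statement is immediate. In general, if $h=0$ then $\mathfrak p=0$ (as $R$ is a domain) and we may leave the variables unchanged, since $R/\mathfrak p=R=k[[x_1,\dots,x_n]]$ is finite over itself and $\mathfrak p\cap R=0$; so I may assume $h\geq 1$, whence $n\geq 1$.

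First I would pick a nonzero $f\in\mathfrak p$ and perform a linear change of coordinates to make $f$ \emph{regular in $x_n$}. Writing $f=\sum_{i\geq d}f_i$ as a sum of homogeneous forms with $f_d\neq 0$, the point is that since $k$ is infinite (it has characteristic zero) the nonzero form $f_d$ does not vanish at some $a\in k^n$; a linear change carrying the line $ka$ to the $x_n$-axis then makes the coefficient of $x_n^d$ in $f_d$ nonzero, equivalently $f(0,\dots,0,x_n)$ has order exactly $d$. The Weierstrass preparation theorem now factors $f=uP$ with $u$ a unit of $R$ and $P$ a monic polynomial in $x_n$ of degree $d$ with coefficients in the maximal ideal of $R':=k[[x_1,\dots,x_{n-1}]]$. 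Since $u$ is a unit, $P\in\mathfrak p$.

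Next I would exploit that $R=R'[[x_n]]$ and that $P$ is monic of degree $d$ in $x_n$: by Weierstrass division every element of $R$ is uniquely $qP+r$ with $r$ a polynomial in $x_n$ of degree $<d$, so $R/(P)$ is a free $R'$-module on $1,x_n,\dots,x_n^{d-1}$. As $(P)\subseteq\mathfrak p$, the quotient $R/\mathfrak p$ is a finite $R'$-module, hence a finite module over $R'/\mathfrak p'$, where $\mathfrak p':=\mathfrak p\cap R'$ is a prime of $R'$. The resulting integral injection $R'/\mathfrak p'\hookrightarrow R/\mathfrak p$ forces $\dim R'/\mathfrak p'=\dim R/\mathfrak p$; since $R$ and $R'$ are regular local (hence catenary) of dimensions $n$ and $n-1$, this reads $(n-1)-{\rm ht}(\mathfrak p')=n-h$, so ${\rm ht}(\mathfrak p')=h-1$.

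Finally I would apply the inductive hypothesis to $\mathfrak p'$ in the $(n-1)$-variable ring $R'$: after a coordinate change of $R'$ one obtains a power series subring $T$ in $(n-1)-(h-1)=n-h$ of the new variables with $\mathfrak p'\cap T=0$ and $R'/\mathfrak p'$ finite over $T$. Then $R/\mathfrak p$ is finite over $T$, and since $T\subseteq R'$ we have $\mathfrak p\cap T=\mathfrak p'\cap T=0$. A permutation of the $n$ variables (placing the Weierstrass variable together with the $h-1$ excluded variables of $R'$ first) renames $T$ as $k[[x_{h+1},\dots,x_n]]$, yielding the claim. I expect the genuine content to be concentrated in the Weierstrass step: arranging regularity in $x_n$ and invoking preparation and division are where the formal-power-series hypothesis is really used, whereas the height computation and the final relabeling are routine once the dimension theory of integral extensions is in hand.
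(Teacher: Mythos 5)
The paper does not prove this lemma at all: it is quoted as a known result, with a citation to Zariski--Samuel \cite[VII, Theorem 31]{ZS}, so there is no in-paper argument to compare against. Your proof is correct, and it is essentially the classical argument that lies behind the cited reference: reduce to a prime of smaller height in one fewer variable via Weierstrass preparation and division, using that $k$ is infinite to arrange regularity in $x_n$ by a linear change of coordinates. All the key steps check out: $R/(P)$ is $R'$-free of rank $d$ by uniqueness in Weierstrass division, so $R/\mathfrak p$ is module-finite over $R'/\mathfrak p'$; the integral injection of domains $R'/\mathfrak p' \hookrightarrow R/\mathfrak p$ preserves dimension; and since $R$ and $R'$ are regular (hence Cohen--Macaulay, hence catenary) local domains, the formula $\dim A/\mathfrak q + \mathrm{height}\,\mathfrak q = \dim A$ gives $\mathrm{height}\,\mathfrak p' = h-1$, as you say. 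Two points you use implicitly but which are routine: a change of variables of $R'$ extends to one of $R = R'[[x_n]]$ fixing $x_n$, which is what legitimizes combining the inductive coordinate change with the Weierstrass variable; and transitivity of module-finiteness gives finiteness of $R/\mathfrak p$ over $T$. So your proposal in effect supplies the proof that the paper delegates to the literature.
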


\section{Proof of Theorem \ref{main} }
Let $\mathfrak p\subset R$ be an associated prime of $M$, let $z\in M$ be an element of $M$ with $\texttt{Ann}_R(z)=\mathfrak{p}$, let $h={\rm height}\mathfrak p$ and assume that the variables $x_1,\cdots,x_n$ satisfy the conditions in Lemma \ref{NoethNorm}. Let $N=Dz\subset M$ be the $D$-submodule of $M$ generated by $z$. Lemma \ref{submodule} implies that it is enough to show that dim$N\geq n$.

Let $\texttt{gr}(N)=\Sigma_0z\oplus(\Sigma_1z/\Sigma_0z)\oplus(\Sigma_2z/\Sigma_1z)\oplus\cdots$ be the associated graded $\texttt{gr}(D)$-module. Clearly $\texttt{gr}(N)$ is a cyclic $\texttt{gr}(D)$-module, generated by $z\in \Sigma_0 z$. Hence $\texttt{gr}(N)\cong \texttt{gr}(D)/J$ where $J\subset \texttt{gr}(D)$ is the annihilator of $z\in \texttt{gr}(N)$. Since $\texttt{gr}(N)$ is a graded module, $J$ is a homogeneous ideal of $\texttt{gr}(D)$. 

Set $\tilde D$ to be the ring $\texttt{gr}(D)/J$. Since $\texttt{gr}(N)\cong \texttt{gr}(D)/J$, we need to show that the dimension of $\texttt{gr}(D)/J$ is at least $n$. Let $\tilde D_i$ be the degree $i$ piece of $\tilde D$, i.e. $\tilde D=\tilde D_0\oplus\tilde D_1\oplus\dots$. Let $\tilde D_+\subset \tilde D$ be the ideal generated by the elements of positive degrees. Clearly, $\tilde D_+$ is a homogeneous prime ideal of $\tilde D$, hence ${\rm dim}\tilde D\geq {\rm dim}\tilde D/\tilde D_++{\rm height}\tilde D_+.$ Since $\texttt{Ann}_R(z)=\mathfrak{p}$, it follows that  $\tilde D_0\cong R/\mathfrak{p}$. and so dim$\tilde D/\tilde D_+={\rm dim}R/\mathfrak{p}=n-h$. It follows that it is enough to prove that $${\rm height}\tilde D_+\geq h.$$

Let $S\subset \tilde D_0$ be the non-zero elements of $\tilde D_0$. Since $\tilde D_0\cong R/\mathfrak p$ and $\mathfrak p$ is a prime ideal, $S$ is a multiplicatively closed set. Let $S^{-1}\tilde D=S^{-1}\tilde D_0\oplus S^{-1}\tilde D_1\oplus \dots$ be the ring obtained from $\tilde D$ by inverting every element of $S$. Since height$\tilde D_+\geq {\rm height}S^{-1}\tilde D_+$ it is enough to prove that $${\rm height }S^{-1}\tilde D_+\geq h.$$ 

Clearly, $S^{-1}\tilde D_0\cong K$, the fraction field of $R/\mathfrak p$, and $S^{-1}\tilde D$ is a finitely generated graded $K$-algebra. By Lemma \ref{K-algebra}, the function $p(t)={\rm dim}_K(S^{-1}\tilde D_0\oplus S^{-1}\tilde D_1\oplus \dots \oplus S^{-1}\tilde D_t)$ is, for sufficiently big $t$, a polynomial in $t$ which we denote $\tilde p(t)$, and deg$\tilde p(t)={\rm dim}S^{-1}\tilde D$. It is enough to prove that $${\rm deg}\tilde p(t)\geq h.$$

Let $T=k[[x_{h+1},\cdots,x_n]]$ and let $\mathcal{K}$ be the field of fractions of $T$. By Lemma \ref{NoethNorm} $K$ is a finite field extension of $\mathcal K$. Let $d$ be the degree of this extension. Since dim$_\mathcal KL=d({\rm dim}_KL)$ for every $K$-vector space $L$, we conclude that $$q(t)={\rm dim}_{\mathcal K}(S^{-1}\tilde D_0\oplus S^{-1}\tilde D_1\oplus \dots \oplus S^{-1}\tilde D_t)=dp(t).$$ Hence $q(t)$, for sufficiently high $t$, is a polynomial in $t$ of the same degree as $\tilde p(t)$. We denote this polynomial $\tilde q(t)$. It is enough to show that $${\rm deg}\tilde q(t)\geq h.$$

Since $K$ is a finite field extension of $\mathcal K$, $S^{-1}L=\mathcal K\otimes_TL$ for every $(\tilde D_0=R/P)$-module $L$. In particular, $S^{-1}\tilde D_0\oplus S^{-1}\tilde D_1\oplus \dots \oplus S^{-1}\tilde D_t=\mathcal K\otimes_T\tilde D_0\oplus \mathcal K\otimes_T\tilde D_1\oplus\dots\mathcal K\otimes_T\tilde D_t$. But $\tilde D_i\cong \Sigma_iz/\Sigma_{i-1}z$, hence $$\tilde{q}(t)={\rm dim}_{\mathcal K}(\mathcal K\otimes_T\tilde D_0\oplus \mathcal K\otimes_T\tilde D_1\oplus\dots\mathcal K\otimes_T\tilde D_t)={\rm dim}_{\mathcal K}(\mathcal K\otimes_T(\tilde D_0\oplus\tilde D_1\oplus\dots\oplus\tilde D_t))={\rm dim}_{\mathcal K}(\mathcal K\otimes_T(\Sigma_tz)).$$ The last equality and the next lemma use the crucial fact that $\Sigma_tz$ and $N$ are $T$-modules, hence $\mathcal K\otimes_T(\Sigma_tz)$ and $\mathcal K\otimes_TN$ exist (in contrast, $\Sigma_tz$ and $N$ are not $(\tilde D_0=R/P)$-modules).

\begin{lemma}
The set $\{d_1^{t_1}\cdots d_h^{t_h}z\}\subset \mathcal K\otimes_TN$, as $t_1,\cdots, t_h$ range over all non-negative integers, is linearly independent over $\mathcal{K}$ (by a slight abuse of notation we identify the elements $d_1^{t_1}\cdots d_h^{t_h}z$ of $N$ with their images in $\mathcal K\otimes_TN$ under the natural localization map $N\to \mathcal K\otimes_TN$ that sends every $n\in N$ to $1\otimes n$).
\end{lemma}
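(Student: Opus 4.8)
The plan is to reduce the statement to a question about the commutative ring $\texttt{gr}(D)=R[\bar d_1,\dots,\bar d_n]$ and then settle it by an inductive symbol computation combined with Noether normalization. First I would clear denominators: a $\mathcal K$-linear dependence among the $d_1^{t_1}\cdots d_h^{t_h}z$ in $\mathcal K\otimes_T N=(T\setminus\{0\})^{-1}N$ yields, after multiplying by a common denominator in $T$ and then by the element of $T\setminus\{0\}$ witnessing the vanishing in the localization, a genuine relation $\sum_{\mathbf t}a_{\mathbf t}d_1^{t_1}\cdots d_h^{t_h}z=0$ in $N$ with $a_{\mathbf t}\in T$ not all zero. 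Writing $P$ for the corresponding operator and $D_0$ for its order, the principal symbol $\sigma=\sum_{|\mathbf t|=D_0}a_{\mathbf t}\bar d_1^{t_1}\cdots\bar d_h^{t_h}$ is then a nonzero homogeneous element of $J\cap T[\bar d_1,\dots,\bar d_h]$, since $Pz=0\in\Sigma_{D_0-1}z$ forces $\sigma\in J$. So it suffices to show that $J$ contains no nonzero element of $T[\bar d_1,\dots,\bar d_h]$; because $\mathfrak p\cap T=0$, this follows from the stronger assertion that every element of $J\cap R[\bar d_1,\dots,\bar d_h]$ has all of its coefficients in $\mathfrak p$.

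I would prove this last assertion by induction on the degree in the $\bar d$'s. Given a homogeneous $g=\sum_{|\mathbf t|=D_0}c_{\mathbf t}\bar d_1^{t_1}\cdots\bar d_h^{t_h}\in J$ with $c_{\mathbf t}\in R$, lift it to an operator $P$ of order $D_0$ with principal symbol $g$ and $Pz=0$; this is possible because $g\in J$ means the naive lift sends $z$ into $\Sigma_{D_0-1}z$, and I subtract a correction of order $\le D_0-1$. For any $f\in\mathfrak p$ the elementary Leibniz identity $P\circ f=\sum_{\gamma}\tfrac1{\gamma!}(\partial_x^\gamma f)\,(\partial_d^\gamma P)$ (a finite sum obtained by iterating (\ref{E: first}), where $\partial_d^\gamma P$ is the formal $\gamma$-th derivative of $P$ in the variables $d_1,\dots,d_n$), applied to $z$ and using $fz=0$ to kill the $\gamma=0$ term, produces an operator $P'=\sum_{|\gamma|\ge1}\tfrac1{\gamma!}(\partial_x^\gamma f)(\partial_d^\gamma P)$ of order $\le D_0-1$ with $P'z=0$. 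Its degree-$(D_0-1)$ symbol is $\sum_{j=1}^h(\partial_{x_j}f)\,\partial_{\bar d_j}g$ (the sum runs only over $j\le h$, since $g$ involves only $\bar d_1,\dots,\bar d_h$), and it lies in $J\cap R[\bar d_1,\dots,\bar d_h]$. By the inductive hypothesis all its coefficients lie in $\mathfrak p$, so reducing modulo $\mathfrak p$ in $K=\mathrm{Frac}(R/\mathfrak p)$ gives $\sum_{j=1}^h\overline{\partial_{x_j}f}\cdot\partial_{\bar d_j}\bar g=0$ for every $f\in\mathfrak p$, where $\bar g$ is the image of $g$ in $(R/\mathfrak p)[\bar d_1,\dots,\bar d_h]$.

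The main obstacle, and the place where the hypotheses genuinely enter, is that unlike the classical case (where $\mathfrak p$ is maximal and $x_iz=0$) here $x_iz\ne0$ for $i\le h$, so the coefficients cannot simply be read off. This is resolved by the linear-algebra fact, in which Noether normalization and characteristic zero are used, that the gradient vectors $(\overline{\partial_{x_1}f},\dots,\overline{\partial_{x_h}f})$ span $K^h$ as $f$ ranges over $\mathfrak p$. To see this, for each $i\le h$ I would take $f=G_i(x_i)$, where $G_i\in T[Y]$ is the minimal polynomial over $\mathcal K=\mathrm{Frac}(T)$ of the element $\bar x_i\in R/\mathfrak p$ (which is integral over $T$ by Lemma \ref{NoethNorm}); its coefficients lie in $T$ because $T$ is regular, hence normal, and $G_i$ is separable because $\mathrm{char}\,k=0$, so $G_i'(\bar x_i)\ne0$ in $K$. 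Since $G_i(x_i)$ involves no $x_l$ with $l\le h$, $l\ne i$, its gradient is $G_i'(\bar x_i)\,\mathbf e_i$, a nonzero multiple of the $i$-th standard basis vector, and these span $K^h$. Feeding this into the displayed relation forces $\partial_{\bar d_j}\bar g=0$ for all $j\le h$; since $\mathrm{char}\,k=0$ and $\bar g$ is homogeneous of degree $D_0\ge1$, this yields $\bar g=0$, i.e. all $c_{\mathbf t}\in\mathfrak p$, completing the induction. Applied to $\sigma$, it forces the coefficients of $\sigma$ into $\mathfrak p\cap T=0$, contradicting $\sigma\ne0$ and proving the lemma.
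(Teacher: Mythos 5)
Your proof is correct, and it reaches the lemma by a genuinely different route than the paper, although the two share their key arithmetic input: by Noether normalization each $\bar x_i$, $i\le h$, has a monic minimal polynomial over $\mathcal K$ with coefficients in $T$ (a point you justify via normality of $T$, and which the paper leaves implicit), this polynomial kills $z$, and by characteristic zero its derivative does not. The paper never leaves the module $N$: iterating (\ref{E: first}) it shows $f_i(x_i)^sd_i^tz=0$ for $s>t$ and $f_i(x_i)^td_i^tz=(-1)^tt!\,f_i'(x_i)^tz\ne 0$, then multiplies a purported dependence by $f_1(x_1)^{\tau_1}\cdots f_h(x_h)^{\tau_h}$, where $(\tau_1,\dots,\tau_h)$ is a term of maximal total degree; every other term is annihilated outright and the chosen term survives as a nonzero multiple of $z$. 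You instead pass to the associated graded ring and prove a stronger, structural statement: the characteristic ideal $J$ of the cyclic module $Dz$ satisfies $J\cap T[\bar d_1,\dots,\bar d_h]=0$ (indeed, every element of $J\cap R[\bar d_1,\dots,\bar d_h]$ has all coefficients in $\mathfrak p$), via induction on symbol degree, the Leibniz expansion of $P\circ f$, and Euler's identity. Your version costs more machinery---the multi-index Leibniz formula must itself be proved by iterating (\ref{E: first}), and characteristic zero enters additionally through the $1/\gamma!$'s and through Euler's relation---but it buys an involutivity-flavored fact about $J$ from which the lemma is immediate, and it cleanly isolates where each hypothesis is used; the paper's computation is shorter and completely elementary, which fits its stated goal of self-containment. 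Two small completions would tighten your write-up: record the base case of the induction, namely $J\cap R=\texttt{Ann}_R(z)=\mathfrak p$, and state explicitly that $\sum_{j\le h}(\partial_{x_j}f)\,\partial_{\bar d_j}g$ lies in $J$ because it is the image of $P'\in\Sigma_{D_0-1}$ in $\Sigma_{D_0-1}/\Sigma_{D_0-2}$ and $P'z=0$.
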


\begin{proof}
Since $K$ is a finite extension of $\mathcal K$, let $f_i$, for every $i$ with $1\leq i\leq h$, be the monic minimal polynomial of $\bar x_i$ over $\mathcal K$, where $\bar x_i$ is the image of $x_i$ in $K$. Clearly, $f_i(x_i)\in \mathfrak  p$ and therefore $f_i(x_i)z=0$ while $f'_i(x_i)$, where $f'_i$ is the derivative of $f_i$,  is non-zero in $K$ and therefore $f_i'(x_i)z\ne 0$. 

We claim that if $s>t$ then 
\begin{equation}\label{E: second}
f_i(x_i)^sd_i^tz=0
\end{equation} 
If $t=0$ (hence $s\geq 1$), there is nothing to prove since $f_i(x_i)z=0$. By (\ref{E: first}) $$f_i(x_i)^sd_i=d_if_i(x_i)^s-sf'_i(x_i)f_i(x_i)^{s-1},$$ therefore for $t>0$ we have that
\begin{equation}\label{E: third}
f_i(x_i)^sd^t_iz=d_if_i(x_i)^sd_i^{t-1}z-sf'_i(x_i)f_i(x_i)^{s-1}d_i^{t-1}z=0,
\end{equation}
where both summands in the middle vanish by induction on $t$. This proves the claim.

Equalities (\ref{E: second}) and (\ref{E: third}) imply by induction on $t$ that 
\begin{equation}\label{E: fourth}
f_i(x_i)^td_i^tz=(-1)^tt!f'_i(x_i)^tz\ne 0.
\end{equation} 
Now let $$\gamma=\Sigma_{t_1,\dots, t_h} c_{t_1,\dots, t_h}d_1^{t_1}\cdots d_h^{t_h}z,$$ where $c_{t_1,\cdots, t_h}\in \mathcal K$ be a linear combination of finitely many elements of the set $\{d_1^{t_1}\cdots d_h^{t_h}z\}$. Let $\{\tau_1,\dots, \tau_h\}$ be an index of highest total degree $\tau_1+\dots+\tau_h$. Every other $c_{t_1,\dots, t_h}d_1^{t_1}\cdots d_h^{t_h}z$ in this linear combination has some $t_j$ with $t_j<\tau_j$, hence $f_j(x_j)^{\tau_j}c_{i_1\dots,i_n}d^{i_1}\cdots d_n^{i_n}=0$ and  $$f_1(x_1)^{\tau_1}\cdots f_h(x_h)^{\tau_h}\gamma=c_{\tau_1,\dots,\tau_h}f_1(x_1)^{\tau_1}\cdots f_h(x_h)^{\tau_h}d_1^{\tau_1}\cdots d_h^{\tau_h}z$$$$=(-1)^{\tau_1+\dots+\tau_n}t_1!\cdots t_n!c_{\tau_1,\dots,\tau_h}f_1'(x_1)^{\tau_1}\cdots f_h'(x_h)^{\tau_h}z\ne 0,$$ where we use (\ref{E: second}), (\ref{E: fourth}) and the fact that $f_i(x_i)^{\tau_i}$ and $f'_i(x_i)^{\tau_j}$ commute with every $d_j^{\tau_j}$ with $j\ne i$. Thereofre $\gamma\ne 0$. 
\end{proof}

The number of the elements $\{d_1^{t_1}\cdots d_h^{t_h}z\}$, as $d_1+\dots+d_h\leq t$, is the number of monomials in $h$ variables of total degree at most $t$, which equals ${t+h\choose h}$. Since these elements are in $\Sigma_tz$ and are linearly independent, $\tilde q(t)\geq {t+h\choose h}$ for sufficiently high $t$. But $\tilde q(t)$ is a polynomial in $t$ and ${t+h\choose h}$ is a polynomial in $t$ of degree $h$.  Hence the degree of $\tilde q(t)$ is at least $h$. This completes the proof of Theorem \ref{main}. \qed

\newpage


\begin{thebibliography}{9}

\bibitem{AMcD} 
M.F. Atiyah and I.G. MacDonald, 
\textit{Introduction to Commutative Algebra}, 
Addison-Wesley, 1969.

\bibitem{Bjork} 
J, E, Bj{\"o}rk 
\textit{Rings of Differential Operators}. 
North-Holland, 1979.

\bibitem{Cout} 
S. C. Coutinho
\textit{A Primer of Algebraic D-Modules  }, London Mathematical Society Student Texts,
Cambridge University Press; 1st edition, September 29, 1995.
 
\bibitem{einstein} 
Gennady Lyubeznik. 
\textit{A characteristic-free proof of a basic result on D-modules},
Journal of Pure and Applied Algebra, Volume 215, Issue 8, August 2011, Pages 2019–2023.


\bibitem{ZS} 
O. Zariski and P. Samuel
\textit{Commutative Algebra}, Vol. II, Graduate Texts in Mathematics 29 Springer, Berlin, 1975.









%\bibitem{knuthwebsite} 
%Roos, J-E,: Computers and Typesetting,
%\\\texttt{http://www-cs-faculty.stanford.edu/\~{}uno/abcde.html}
\end{thebibliography}
\end{document}